\crefname{claim}{Claim}{Claims}
\crefname{lemma}{Lemma}{Lemmas}
\crefname{theorem}{Theorem}{Theorems}
\crefname{proposition}{Proposition}{Propositions}
\crefname{definition}{Definition}{Definitions}
\crefname{conjecture}{Conjecture}{Conjectures}
\crefname{corr}{Corollary}{Corollaries}
\declaretheorem[name=Theorem]{theorem}
\declaretheorem[name=Lemma, sibling=theorem]{lemma}
\declaretheorem[name=Conjecture, sibling=theorem]{conjecture}
\declaretheorem[name=Claim, parent=theorem]{claim}
\def\cqedsymbol{\ifmmode$\lrcorner$\else{\unskip\nobreak\hfil
\penalty50\hskip1em\null\nobreak\hfil$\lrcorner$
\parfillskip=0pt\finalhyphendemerits=0\endgraf}\fi}
\let\le\leqslant
\let\leq\leqslant
\let\geq\geqslant
\let\OLDthebibliography\thebibliography
\renewcommand\thebibliography[1]{
  \OLDthebibliography{#1}
  \setlength{\parskip}{0pt}
  \setlength{\itemsep}{0pt plus 0.3ex}
}%
   \def\MR#1{}
\newcommand{\GF}{\textsf{GF}}
\title{Girth in $\GF(q)$-representable matroids}
\author{James Davies}
\affil{Trinity Hall, Cambridge, UK.}
\author{Meike Hatzel\thanks{Supported by the Institute for Basic Science (IBS-R029-C1).}}
\affil{Discrete Mathematics Group, Institute for Basic Science (IBS), Daejeon, South Korea.}
\author{Kolja Knauer\thanks{Supported by the Ministerio de Ciencia, Innovación y Universidades Grant No.~PID2022-137283NB-C22}}
\affil{Departament de Matem\'{a}tiques i Inform\'{a}tica, Universitat de Barcelona, Spain.}
\author{Rose McCarty\thanks{Supported by the National Science Foundation under Grant No.~DMS-2202961.}}
\affil{School of Mathematics and School of Computer Science, Georgia Institute of Technology, USA.}
\author{Torsten Ueckerdt}
\affil{Institute of Theoretical Informatics, Karlsruhe Institute of Technology, Germany.}
\begin{document}

\maketitle

\begin{abstract}
    We prove a conjecture of Geelen, Gerards, and Whittle that for any finite field $\GF(q)$ and any integer $t$, every cosimple $\GF(q)$-representable matroid with sufficiently large girth contains either $M(K_t)$ or $M(K_t)^*$ as a minor.
\end{abstract}

\section{Introduction}

The \emph{girth} of a matroid $M$ is the minimum number of elements in a circuit of $M$, or $\infty$ if $M$ has no circuits. 
Examples of cosimple matroids with large girth include the graphic matroid of a $3$-edge-connected graph with large girth and $M(K_t)^*$, the dual of the graphic matroid of the $t$-vertex clique $K_t$.
Geelen, Gerards, and Whittle~\cite[Conjecture~5.4]{highlyConnMatroids} conjectured that every cosimple $\GF(q)$-representable matroid of large girth contains one of these examples as a minor. We prove their conjecture.

\begin{restatable}{theorem}{girthThm}
\label{thm:main}
    For any finite field $\GF(q)$ and any integer $t$, there exists an integer $f(t,q)$ such that every cosimple $\GF(q)$-representable matroid with girth at least $f(t,q)$ contains either $M(K_t)$ or $M(K_t)^*$ as a minor.
\end{restatable}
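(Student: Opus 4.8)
The plan is to argue by contradiction using the structure theory of minor-closed classes of $\GF(q)$-representable matroids. Suppose the theorem fails for some $q,t$, and let $\mathcal{M}$ be the class of $\GF(q)$-representable matroids with no $M(K_t)$-minor and no $M(K_t)^*$-minor. Then $\mathcal{M}$ is minor-closed, proper, and — since $(M(K_t))^*=M(K_t)^*$ — closed under duality, and by hypothesis it contains cosimple matroids of unbounded girth; we must derive a contradiction. Two facts drive the argument. First, since $M(K_t)\notin\mathcal{M}$, the Growth Rate Theorem of Geelen, Kung and Whittle rules out the quadratic and exponential cases (each forces all graphic matroids into the class, via Dowling or projective geometries), so $\mathcal{M}$ has \emph{linear density}: there is $\alpha=\alpha(t,q)$ with $|E(N)|\le\alpha\,r(N)$ for every simple $N\in\mathcal{M}$. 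Second, by the structure theorem for proper minor-closed classes of $\GF(q)$-representable matroids (Geelen, Gerards and Whittle) there is $k=k(t,q)$ such that every member of $\mathcal{M}$ has a tree-decomposition of adhesion less than $k$ whose torsos (which are minors of the matroid) become, after a rank-$\le k$ perturbation together with the addition and deletion of at most $k$ elements, frame matroids or duals of frame matroids over a subfield of $\GF(q)$, hence $\GF(q)$-representable.

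The core of the argument is to pass from the global hypotheses (cosimple; girth at least a large constant $g^*=g^*(t,q)$) to a single frame-type piece. Take a counterexample $M$ with fewest elements. One first shows $M$ is highly connected: if $M$ had a low-order separation with both sides large, then restricting to one side gives a minor whose girth is at least that of $M$ (deletion cannot decrease girth), and because the separation has bounded order that restriction has series classes of bounded size, so its cosimplification is a \emph{cosimple} member of $\mathcal{M}$ of girth at least $g^*$ with strictly fewer elements — contradicting minimality. Hence the structural decomposition of $M$ is essentially trivial, so — up to a bounded perturbation, boundedly many extra elements, and after replacing $M$ by a large minor — either $M$ or $M^*$ is a frame matroid $M(\Omega)$, where $\Omega=(G,\mathcal{B})$ is a biased graph with underlying graph $G$.

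Consider the case that $M$ itself is (close to) $M(\Omega)$. A rank-$\le k$ perturbation changes girth by at most $k$ (plus boundedly many elements), so $M(\Omega)$ has girth at least $g^*-O(k)$; since shortest balanced cycles and shortest handcuffs of $\Omega$ are circuits of $M(\Omega)$, and cosimplicity prevents a short unbalanced cycle from being ``pendant'' (its edges would otherwise be coloops or lie in a short series class), the graph $G$ has girth at least $(g^*-O(k))/c$ for a constant $c=c(k)$, and after suppressing vertices of degree at most $2$ (shrinking the girth by only a bounded factor) it has minimum degree at least $3$ while keeping enormous girth. By the theorem of K\"uhn and Osthus, $G$ then has a $K_s$-minor with $s=s(t,q,k)$ as large as we like; contracting the branch-trees of a model yields, as a minor of $M(\Omega)$, the frame matroid $M(K_s,\mathcal{B}')$ of some biased graph on $K_s$ — a \emph{simple} matroid of rank at most $s$ with exactly $\binom{s}{2}$ elements. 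The corresponding minor of the perturbed torso is a bounded perturbation of $M(K_s,\mathcal{B}')$ that is $\GF(q)$-representable and lies in $\mathcal{M}$, so its simplification has rank $O(s)$ and $\binom{s}{2}/O(1)$ elements — contradicting linear density once $s$ is large enough. In the other case $M^*$ is (close to) $M(\Omega)$: then $M$ cosimple and of girth $\ge g^*$ means $M^*$ is simple and of cogirth $\ge g^*-O(k)$, hence $M(\Omega)$ has cogirth $\ge g^*-O(k)$; since vertex-stars of $\Omega$ contain cocircuits of $M(\Omega)$, the graph $G$ has huge minimum degree, hence a huge $K_s$-minor by Kostochka and Thomason, and the same density contradiction applies.

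The main obstacle is the reduction in the second paragraph: controlling girth and cosimplicity through the structural tree-decomposition. Deletion never lowers girth, but torsos need not inherit it — already $U_{r,r+1}$ has girth $r+1$ but decomposes into triangles — and it is precisely cosimplicity (which $U_{r,r+1}$ lacks, its dual being a non-simple $U_{1,r+1}$) that must be leveraged to keep the ``long series-like'' parts from collapsing torso-girth; making this quantitative and compatible with the structure theorem is the technical crux. Secondary difficulties are the analysis of the biased graph underlying a large-girth cosimple frame matroid, and the careful bookkeeping of ``bounded perturbation plus bounded ground-set change'' so that a clique-minor produced downstream genuinely contradicts linear density upstream in $M$.
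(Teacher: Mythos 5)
Your outline follows the route that the paper explicitly set out to avoid, and several of its key steps do not go through as stated. First, the minimal-counterexample connectivity reduction is broken: if you restrict a cosimple matroid to one side of a low-order separation, the series classes of that restriction are \emph{not} bounded in terms of the order of the separation (restricting the graphic matroid of a cubic large-girth graph to a long cycle already gives a single series class of unbounded size, whose cosimplification is a loop), so the cosimplification need not retain large girth and you cannot conclude that a smaller counterexample exists. This is exactly the $U_{r,r+1}$-type collapse you flag as the ``technical crux'' for torsos, but it already defeats your own reduction one step earlier. Second, the claim that a cosimple frame matroid $M(\Omega)$ of large girth has an underlying graph $G$ of large girth is false: short \emph{unbalanced} cycles are independent in $M(\Omega)$, and they only produce circuits when two of them are joined into a handcuff or theta, which can be made long by keeping them far apart; this is precisely why $B(K_t)$ appears as an extra outcome in the paper's Conjecture~6. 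Ruling this out for $\GF(q)$-representable frame matroids requires a genuine analysis of group-labelled graphs that you do not supply. Third, you invoke the Geelen--Gerards--Whittle structure theorem in a form (tree-decompositions with perturbed frame/co-frame torsos) whose full proof is not available in the literature, and you omit its third outcome (representability over a proper subfield after perturbation), which needs separate treatment when $q$ is not prime.

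For contrast, the paper's actual proof uses none of this machinery. It fixes a basis $B$, takes the standard representation $[\,I\mid A\,]$, and encodes each non-basis column as a subset of $B\times(\GF(q)\setminus\{0\})$. The Growth Rate Theorem applied to contractions of $M$ shows this set system has a linearly bounded shatter function, and applied to $M^*$ shows the system has too many sets to be $\delta$-separated for a suitable constant $\delta=\delta(t,q)$, by Haussler's Shallow Packing Lemma. Two columns agreeing outside fewer than $\delta$ rows yield a circuit of bounded size meeting $E(M)\setminus B$ in at most two elements, which in particular bounds the girth. If you want to salvage your approach you would essentially be reproving (a hard case of) the structure theory; the shatter-function argument is both complete and elementary by comparison.
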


\Cref{thm:main} generalizes the theorem of Thomassen~\cite{ThomassenGirthGraphs} that any graph of minimum degree at least three and sufficiently high girth contains $K_t$ as a minor. Thomassen's Theorem is celebrated, and there are several strengthenings known for graphs~\cite{KuhnOsthus2003, Mader98}. By considering the cographic case, we can see that \cref{thm:main} also generalizes the classic lemma due to Mader~\cite{MaderAverageDegree} (and optimized by Thomason~\cite{Thomason84} and Kostochka~\cite{Kostochka82, Kostochka84}) which says that any sufficiently dense graph contains $K_t$ as a minor. Both graphic and cographic matroids must be included as potential outcomes in \cref{thm:main}; this is because not every graph has a small cycle or cut.
However, perhaps the condition about representativity could be relaxed. We discuss this possibility in \cref{sec:conclusion}.


It was believed that a proof of \cref{thm:main} would require the use of a structure theorem for matroid minors~\cite{highlyConnMatroids}. Yet our proof of \cref{thm:main} is surprisingly short; it relies on a previously unexplored connection between the Matroid Growth Rate Theorem~\cite{geelen2003cliques} and Haussler's Shallow Packing Lemma~\cite{Haussler95}.
In the context of simple $\GF(q)$-representable matroids with a forbidden graphic minor, the Growth Rate Theorem of Geelen and Whittle~\cite{geelen2003cliques} bounds the number of elements of the matroid by a linear function of its rank (we remark that there is also a more general Growth Rate Theorem of Geelen, Kung, and Whittle~\cite{GrowthRateTheorem}).
This theorem directly generalizes Mader's Theorem~\cite{MaderAverageDegree}.





Our key observation is that for $\GF(q)$-representable matroids, the Growth Rate Theorem (\cref{thm:GFgrowthRate}) can be interpreted in terms of the shatter function of an associated set system.
This observation allows us to apply powerful tools such as Haussler's Shallow Packing Lemma~\cite{Haussler95} (\cref{lem:packing}). These concepts are fundamental notions in discrete and computational geometry~\cite{MatousekBook, Suk24, Welzl88}, the combinatorics of set systems~\cite{Sauer1972, Shelah1972, VC15}, and first-order logic~\cite{complexityTWW, flipperGamesMonStable, RVS19}. However, \cref{thm:main} is the first application we know of to matroids\footnote{A special case of this connection was implicitly used by the fourth author in order to motivate a conjecture~\cite[Conjecture~3.5.4]{McCartyThesis} about the neighborhood complexity of graphs with a forbidden vertex-minor.}.

The associated set system we consider in order to apply Haussler's Shallow Packing Lemma is inspired by fundamental graphs. If $M$ can be represented by the columns of a binary matrix $\big[ I\text{ | }A \big]$ where $I$ is an identity matrix whose columns correspond to a basis $B$ of $M$, then the \emph{fundamental graph} with respect to $B$ is the bipartite graph whose bipartite adjacency matrix is $A$. Thus, binary matroids are determined by their fundamental graphs. Fundamental graphs were originally defined for binary matroids~\cite{graphicIsoSystems, RWAndVM}. However, they can also be defined and used for general matroids~\cite{GGKexcluded, GHMO}. 
However, this usually requires more care, since matroids are not generally determined by their fundamental graphs. Instead of taking this approach, we will define an associated set system that stores more information about the matroid than its fundamental graph.

\section{The proof}
\label{sec:girth}
In this section, we prove \cref{thm:GFqCase}, which immediately implies \cref{thm:main}.
First, we need to introduce some notation, as well as the Growth Rate Theorem and Haussler's Shallow Packing Lemma. 



The Growth Rate Theorem for $\GF(q)$-representable matroids of Geelen and Whittle~\cite{geelen2003cliques} says the following.
We remark that Nelson, Norin, and Omana \cite{nelson2023density} have recently improved the bounds on $\ell(t,q)$ to a singly exponential function.

\begin{theorem}[\cite{geelen2003cliques}]
\label{thm:GFgrowthRate}
    For any integers $t$ and $q$, there exists an integer $\ell(t,q)$ such that any simple rank-$n$ $\GF(q)$-representable matroid with no $M(K_t)$ minor has at most $\ell(t,q)\cdot n$ elements.
\end{theorem}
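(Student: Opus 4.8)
The plan is to prove the equivalent statement that there is a function $\ell(t,q)$ such that every simple $\GF(q)$-representable matroid $M$ of rank $n$ with $|E(M)| > \ell(t,q)\cdot n$ has an $M(K_t)$-minor, arguing by induction on $t$ and, within that, on the rank $n$. The case $t \le 3$ is elementary: a simple rank-$n$ matroid with more than $n$ elements has a circuit of size at least three, hence a $U_{2,3}=M(K_3)$-restriction, so $\ell(3,q)=1$ suffices. Since a simple rank-$n$ $\GF(q)$-representable matroid has at most $(q^n-1)/(q-1)$ elements, the hypothesis $|E(M)|>\ell n$ is vacuous once $n$ is small relative to $\ell$, which pins down the base of the rank-induction.

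The first part of the inductive step is a sequence of \emph{connectivity reductions}. Deleting a cocircuit drops the rank by exactly one, so if $M$ has a cocircuit of size less than $\ell$ we may delete it: the result is still simple, $\GF(q)$-representable, and $M(K_t)$-minor-free, and, since $\ell$ is large, still has more than $\ell$ times its rank many elements, so by induction on rank we may work with it instead. Iterating, we reduce to the case that every cocircuit of $M$ has at least $\ell$ elements; by similar but more delicate arguments — handling low-order vertical separations, with the inductive parameter chosen so that the density lost at each step does not accumulate — we reduce further to the case that $M$ is vertically $k$-connected for a prescribed $k = k(t,q)$, while still having at least $\ell' n$ elements for a constant $\ell'$ that is smaller than $\ell$ but still enormous. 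The constants $k$, $\ell'$, and $\ell$ are fixed only at the very end.

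The second part is to extract the clique minor from this vertically $k$-connected, linearly dense core, which we view as a spanning restriction of $\mathrm{PG}(n-1,q)$ with at least $\ell' n$ points. The idea is to build $M(K_t)$ by greedily growing a minor isomorphic to $M(K_s)$ for $s = 3, 4, \dots, t$. At each stage, the linear abundance of points forces the existence of points lying in directions genuinely new relative to the current $M(K_s)$-structure — because, by the elementary bound above, a flat of bounded rank cannot absorb more than $O_q(1)$ points — while vertical $k$-connectivity prevents that structure from being ``cut off'' from the fresh points, so that they can be incorporated after suitable further contractions. Since every line of a $\GF(q)$-representable matroid has at most $q+1$ points, a fresh point interacts with the current configuration in one of only $O_q(1)$ local ways, so a pigeonhole/Ramsey argument over these local types — nested $t$ times — produces a minor isomorphic to $M(K_t)$ rather than some unrelated dense configuration.

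I expect this second part to be the main obstacle. A priori a dense, highly connected $\GF(q)$-representable matroid could be ``purely projective'' and graphic-minor-free, and ruling this out is exactly the content of the theorem; turning the greedy extension into a rigorous argument requires a carefully nested chain of Ramsey-type extractions, which is why the resulting $\ell(t,q)$ is so large — tower-type in the original argument of Geelen and Whittle, later brought down to a singly exponential function by Nelson, Norin, and Omana. A secondary subtlety, feeding back into the first part, is that every simplification carried out after a contraction can quietly erode density, so the connectivity one must demand of the core — and hence $k$ — can only be fixed once one knows how much slack the Ramsey argument of the second part consumes.
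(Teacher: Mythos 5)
This statement is not proved in the paper at all: it is the Growth Rate Theorem of Geelen and Whittle, imported as a black box (the paper cites \cite{geelen2003cliques}, and notes that Nelson, Norin, and Omana later improved the bound), precisely because its proof is a substantial piece of work that the short girth argument is designed to avoid. So the relevant question is whether your blind attempt actually constitutes a proof, and it does not. The first part (deleting small cocircuits preserves the density hypothesis since rank drops by one while at most $\ell-1$ elements are lost, and a further reduction to a vertically $k$-connected dense minor) is a plausible and standard-looking reduction, though the step to vertical $k$-connectivity is only asserted ``by similar but more delicate arguments.''

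The genuine gap is the second part, and you essentially concede it yourself. To go from a vertically $k$-connected, simple $\GF(q)$-representable matroid with at least $\ell' n$ elements to an $M(K_t)$ minor, you propose growing $M(K_s)$ to $M(K_{s+1})$ using ``fresh points'' plus a pigeonhole/Ramsey argument over $O_q(1)$ local types, but no actual mechanism is given: you never specify what structure is carried along the induction (a minor alone is not enough; one needs something like disjoint connected ``branch'' pieces or a pre-clique configuration with controlled interface), why a single fresh point can be attached to all $s$ existing vertices simultaneously (this needs $s$ essentially independent connections through the matroid, which vertical $k$-connectivity with fixed $k$ does not obviously supply as $s$ grows), or why \emph{linear} density suffices at every stage --- in the graphic case, forcing a $K_t$ minor needs average degree growing with $t$, and the matroid analogue is exactly the nontrivial content here. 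The observation that a bounded-rank flat contains $O_q(1)$ points gives new points outside any fixed flat, but after the contractions performed while building $M(K_s)$ the relevant obstruction is no longer a bounded-rank flat of the original matroid, so ``genuinely new direction relative to the current structure'' is not justified. In short, your sketch reproduces the general shape of the known strategy but leaves the decisive step --- the extraction of the clique minor from a dense, highly connected $\GF(q)$-representable matroid --- as an unproved hope, which is why the paper cites this theorem rather than proving it.
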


To state Haussler's Shallow Packing Lemma, we need to introduce some definitions. Given a finite ground set $V$, a \emph{set system} $\mathcal{F}$ on $V$ is a subset of $2^V$. (We do not allow multisets.) The \emph{shatter function} of $\mathcal{F}$, denoted by $\pi_{\mathcal{F}}(m)$, is the maximum size of $\mathcal{F}$ when restricted to any $m$ elements in $V$. That is, $\pi_{\mathcal{F}}(m)$ is the maximum, over all $m$-element subsets $W\subseteq V$, of the number of equivalence classes of the relationship $\sim_{W}$ on $\mathcal{F}$ where two sets $F,F' \in \mathcal{F}$ satisfy $F\sim_{W}F'$ if $F \cap W = F'\cap W$. For a positive integer $\delta$, we say that two sets $F,F' \in \mathcal{F}$ are \emph{$\delta$-separated} if their symmetric difference $F \Delta F'$ has size at least $\delta$ (that is, there are at least $\delta$ elements in $V$ which are in one of $F,F'$ but not the other). We say that $\mathcal{F}$ is \emph{$\delta$-separated} if any pair of distinct elements in $\mathcal{F}$ are $\delta$-separated.


We use the following version of Haussler's Shallow Packing Lemma~\cite{Haussler95}. This version is stated as~\cite[Lemma~2.2]{FPS19}, for instance. (Actually~\cite[Lemma~2.2]{FPS19} is a more general version; we only require the case that $d=1$.)

\begin{lemma}[\cite{Haussler95}]
\label{lem:packing}
For any number $\ell \geq 1$, there exists an integer $c=c(\ell)$ so that for every positive integer $\delta$, if $\mathcal{F}$ is a set system on a finite ground set $V$ so that $\mathcal{F}$ is $\delta$-separated and $\pi_\mathcal{F}(m)\leq \ell m$ for every positive integer $m$, then $|\mathcal{F}| \leq c|V|/\delta$.
\end{lemma}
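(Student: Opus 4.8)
This is the case $d=1$ of Haussler's Shallow Packing Lemma, so in the write-up I would simply invoke~\cite{Haussler95} (equivalently~\cite[Lemma~2.2]{FPS19}); what follows is the shape of the argument and the point where the real work lies. Write $n=|V|$ and $N=|\mathcal{F}|$. If $\delta$ is bounded by a constant depending on $\ell$ the statement is already immediate, since $N\le\pi_{\mathcal{F}}(n)\le\ell n=O(\ell)\cdot n/\delta$; so I would assume $\delta$ is large and pass to a uniformly random subset $A\subseteq V$ of size $s$, where $s=\Theta(n/\delta)$ is to be tuned, and study the restricted system $\mathcal{F}|_A$.

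The plan is to compare two estimates on $\mathcal{F}|_A$. On the one hand the hypothesis gives $|\mathcal{F}|_A|\le\pi_{\mathcal{F}}(s)\le\ell s$, so only few traces survive. On the other hand, because $\mathcal{F}$ is $\delta$-separated, any two distinct $F,F'\in\mathcal{F}$ satisfy $|F\Delta F'|\ge\delta$, so they collide on $A$ (that is, $A\cap(F\Delta F')=\emptyset$) with probability at most $(1-\delta/n)^{s}\le e^{-s\delta/n}$, which is a constant bounded away from $1$ once $s\ge n/\delta$; thus a random $A$ of this size usually distinguishes any prescribed pair. Putting these together --- bounding the number of trace classes by $\ell s$ and the ``lost'' sets $N-|\mathcal{F}|_A|$ by the expected number of colliding pairs, then using convexity --- leads to an inequality of the form $N\le\ell s\bigl(1+O(Ne^{-s\delta/n})\bigr)$.

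The obstacle, and the only genuinely delicate point, is extracting the sharp bound from this: choosing $s$ so that the error term $Ne^{-s\delta/n}$ becomes $O(1)$ forces $s=\Theta\bigl((n/\delta)\log(n/\delta)\bigr)$ and yields only $N=O\bigl(\ell(n/\delta)\log(n/\delta)\bigr)$, and a union bound over all pairs (to make $F\mapsto F\cap A$ injective) does no better. Removing this spurious logarithm --- which is what gives the clean linear bound in the statement --- is precisely the content of Haussler's refinement: rather than accounting for all pairs globally, one argues about the local ``Delaunay''-type neighbours of a set within $\mathcal{F}$, and it is this local analysis (carried out in~\cite{Haussler95}) that I would cite rather than reprove. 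Granting it, one obtains $N\le c(\ell)\cdot n/\delta$ with $c(\ell)=O(\ell)$, absorbing the bounded-$\delta$ case into the constant.
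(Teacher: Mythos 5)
The paper gives no proof of this lemma either --- it is quoted directly from~\cite{Haussler95} (in the form of~\cite[Lemma~2.2]{FPS19} with $d=1$), which is exactly what you propose to do, and your sketch correctly locates the genuine difficulty (removing the logarithmic loss from the naive random-sampling argument) in Haussler's local counting. So your treatment matches the paper's.
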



The following theorem immediately implies \cref{thm:main}.
We remark that even in the context of graphs (so when $M$ is graphic), this theorem already provides another strengthening of Thomassen's theorem~\cite{ThomassenGirthGraphs}.

\begin{theorem}
\label{thm:GFqCase}
    For any finite field $\GF(q)$ and any integer $t$, there exists an integer $k=k(t,q)$ such that if $M$ is a cosimple $\GF(q)$-representable matroid not containing $M(K_t)$ or $M(K_t)^*$ as a minor, then for every basis $B$ of $M$, there is a circuit $C$ of size at most $k$ with $|C\backslash B|\le 2$.
\end{theorem}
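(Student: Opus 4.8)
The plan is to fix a basis $B$, assume for contradiction that $M$ has no circuit $C$ with $|C|\le k$ and $|C\setminus B|\le 2$, and then play the Growth Rate Theorem off against Haussler's Shallow Packing Lemma. First I would clear away the degenerate cases: a loop of $M$ is a one-element circuit lying in no basis, and two parallel elements form a two-element circuit meeting $B$ in at most one element, so in either case the desired circuit already exists; hence I may assume $M$ is simple (and, tacitly, nonempty). Fix a representation $[\,I\mid A\,]$ of $M$ over $\GF(q)$ whose identity columns are indexed by $B$, so that for $e\notin B$ the column $A_e\in\GF(q)^B$ satisfies $C(e,B)=\{e\}\cup\operatorname{supp}(A_e)$, where $C(e,B)$ is the fundamental circuit of $e$ with respect to $B$. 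The set system I would work with is $\mathcal F:=\{F_e:e\in E(M)\setminus B\}$ on the ground set $V:=B\times(\GF(q)\setminus\{0\})$, where $F_e:=\{(b,A_{b,e}):b\in B,\ A_{b,e}\ne 0\}$. Since $F_e$ determines $A_e$ and $M$ is simple, $e\mapsto F_e$ is injective, so $|\mathcal F|=|E(M)\setminus B|$. When $q=2$ this is essentially the fundamental graph of $B$; for larger $q$ it keeps the full column data, which is exactly what the separation bound below needs.

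Next I would convert the absence of short circuits through $B$ into a separation property of $\mathcal F$. The assumption at once forces $|\operatorname{supp}(A_e)|\ge k$ for every $e\notin B$, since otherwise $C(e,B)$ is forbidden. For distinct $e,e'\notin B$, the columns indexed by $T:=\{e,e'\}\cup\operatorname{supp}(A_e-A_{e'})$ are dependent (because $A_e-A_{e'}$ is a $\GF(q)$-combination of the identity columns indexed by $\operatorname{supp}(A_e-A_{e'})$), so $T$ contains a circuit $C$. If $C$ avoided $e$ or $e'$, then $C$ would be a fundamental circuit and hence have size at least $k+1$; so if $|T|\le k$ then $C$ contains both $e$ and $e'$ and is forbidden. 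Therefore $|T|\ge k+1$, i.e.\ $|\operatorname{supp}(A_e-A_{e'})|\ge k-1$, and since $|F_e\,\Delta\,F_{e'}|\ge|\{b\in B:A_{b,e}\ne A_{b,e'}\}|=|\operatorname{supp}(A_e-A_{e'})|$, the system $\mathcal F$ is $(k-1)$-separated.

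The crux is a linear bound on the shatter function of $\mathcal F$, and this is where \cref{thm:GFgrowthRate} enters. Fix $W\subseteq V$ with $|W|=m\ge 1$ and let $B_W\subseteq B$ consist of the first coordinates appearing in $W$; then $F_e\cap W$ is a function of the restricted column $A_e|_{B_W}$, so the number of $\sim_W$-classes is at most the number of distinct vectors in $\{A_e|_{B_W}:e\in E(M)\setminus B\}$. These vectors are precisely the non-identity columns of a $\GF(q)$-representation of the minor $N:=M/(B\setminus B_W)$, which has rank $|B_W|$ and no $M(K_t)$-minor, so by \cref{thm:GFgrowthRate} the simplification of $N$ has at most $\ell(t,q)\,|B_W|$ elements. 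As each nonzero such vector lies on one of these at most $\ell(t,q)|B_W|$ points and contributes at most $q-1$ scalings, there are at most $(q-1)\ell(t,q)|B_W|+1\le\bigl((q-1)\ell(t,q)+1\bigr)m$ distinct restricted columns (using $1\le|B_W|\le m$). Hence $\pi_{\mathcal F}(m)\le\ell m$ for every positive integer $m$, with $\ell:=(q-1)\ell(t,q)+1$.

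Finally I would apply Haussler's Shallow Packing Lemma (\cref{lem:packing}) with this $\ell$ and $\delta:=k-1$, obtaining a constant $c=c(\ell)$ with $|E(M)\setminus B|=|\mathcal F|\le c\,|V|/(k-1)=c(q-1)\,r/(k-1)$, where $r:=\operatorname{rank}(M)\ge 1$; hence $|E(M)|\le\bigl(1+\tfrac{c(q-1)}{k-1}\bigr)r$. Cosimplicity supplies the opposing inequality: $M^*$ is a simple $\GF(q)$-representable matroid with no $M(K_t)$-minor, so \cref{thm:GFgrowthRate} applied to $M^*$ gives $|E(M)|\le\ell(t,q)\bigl(|E(M)|-r\bigr)$, that is $|E(M)|\ge\tfrac{\ell(t,q)}{\ell(t,q)-1}\,r$ (we may assume $\ell(t,q)\ge 2$). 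Comparing the two bounds and dividing by $r$ forces $\tfrac{1}{\ell(t,q)-1}\le\tfrac{c(q-1)}{k-1}$, i.e.\ $k-1\le c(q-1)(\ell(t,q)-1)$; so taking $k(t,q):=c(q-1)(\ell(t,q)-1)+2$ yields a contradiction. The step I expect to be the main obstacle is the middle one: spotting that the right set system lives on $B\times(\GF(q)\setminus\{0\})$ rather than on $B$ itself — so that honest $(k-1)$-separation is available — and then recognising the Growth Rate Theorem, applied to the contraction $N$, as precisely a linear bound on the shatter function. Once this setup is found, the circuit–separation dictionary and the concluding double count are routine.
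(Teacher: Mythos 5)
Your proposal is correct and follows essentially the same route as the paper: the same set system $\mathcal{F}$ on $B\times(\GF(q)\setminus\{0\})$, the same shatter-function bound via the Growth Rate Theorem applied to contractions $M/(B\setminus B_W)$, Haussler's Shallow Packing Lemma, and the Growth Rate Theorem applied to $M^*$ to play $|E(M)|$ off against $|B|$. The only difference is organizational --- you run the argument as a contradiction from the assumed absence of a short circuit (hence your extra, correctly handled, case analysis showing $(k-1)$-separation), whereas the paper fixes $\delta$ first and extracts the circuit from a non-$\delta$-separated pair.
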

\begin{proof}
Let $M$ be a cosimple $\GF(q)$-representable matroid which does not contain $M(K_t)$ or $M(K_t)^*$ as a minor, and let $B$ be a basis of $M$. By performing row operations and deleting all zero rows, we can obtain a matrix over $\GF(q)$ of the form $\big[ I\text{ | }A \big]$ so that $I$ is a $|B| \times |B|$ identity matrix, and $M$ is represented by the column vectors of $\big[ I\text{ | }A \big]$ so that the columns of $I$ correspond to the elements in $B$. Thus we may view $A$ as a $|B| \times |E(M)\setminus B|$-matrix.
Given elements $b \in B$ and $e \in E(M)\setminus B$, we write $A_{b,e}$ for the corresponding entry of~$A$.


Recall that in the binary case, the fundamental graph is the bipartite graph with adjacency matrix $A$. In this case, the set system would consist of the supports of the columns of $A$. In the general case, we need to store more information about \emph{which} element of $\GF(q)$ is contained in an entry $A_{b,e}$ of $A$. So we now define a set system $\mathcal{F}$ on the ground set $B \times (\GF(q)\setminus \{0\})$, which we denote by $V$ for short. 

\begin{figure}
    \centering
    \includegraphics{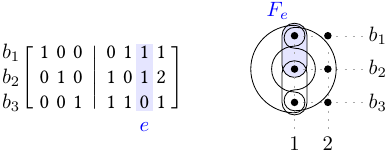}
    \caption{A ternary matroid $M$ with a representation $\big[ I\text{ | }A \big]$ over $\GF(3)$ and the corresponding set system $\mathcal{F}$ with $e \in E(M)\setminus B$ and $F_e = \{(b_1,1), (b_2,1)\}$ highlighted in blue.}
    \label{fig:ternary-example}
\end{figure}

For each element $e \in E(M)\setminus B$, we write $F_e$ for the set of all tuples $(b,\alpha)\in V$ so that $A_{b, e}=\alpha$; see \cref{fig:ternary-example} for an example. Then we set $\mathcal{F}=\{F_e:e \in E(M)\setminus B\}$. We may assume that all distinct elements $e,e' \in E(M)\setminus B$ have $F_e \neq F_{e'}$, since otherwise $e$ and $e'$ are represented by the same column vector in $A$, and we have found the desired circuit.

Now we prove a key claim. Let $\ell = \ell(t,q)$ be the integer from the Growth Rate Theorem (\cref{thm:GFgrowthRate}). So any simple rank-$n$ $\GF(q)$-representable matroid with no 
$M(K_t)$ minor has at most $\ell\cdot n$ elements.

\begin{claim}
\label{clm:shatterFunc}
For any positive integer $m$, we have $\pi_{\mathcal{F}}(m) \leq \ell q \cdot m$.
\end{claim}
\begin{proof}
Let $W \subseteq V$ be an $m$-element set. 

Let $B_W$ be the projection of $W$ onto $B$. That is, $B_W$ is the set of all $b \in B$ such that there exists $\alpha \in \GF(q)\setminus \{0\}$ so that $(b, \alpha) \in W$. Thus $|B_W|\leq m$. Consider taking the matrix $\big[ I\text{ | }A \big]$ which represents $M$ and deleting from it the rows corresponding to elements in $B\setminus B_W$. The column matroid of this matrix is a minor of $M$; it is obtained from $M$ by contracting the elements in $B\setminus B_W$. Thus, by the Growth Rate Theorem (\cref{thm:GFgrowthRate}), its simplification (that is, the matroid obtained by removing loops and only keeping one element from each parallel class) has at most $\ell\cdot m$ elements.

Now consider two elements $e,e' \in E(M)\setminus B$ with $F_e \cap W \neq F_{e'} \cap W$. Let $(b, \alpha) \in W$ be an element in one of these sets but not the other. Then one of $A_{b,e}$ and $A_{b,e'}$ is equal to $\alpha$ and the other is not. So in particular, the columns corresponding to $e$ and $e'$ are distinct even when restricted to rows in $B_W$. Finally, let us consider what happens when we take the simplification of a $\GF(q)$-represented matroid with distinct columns. It has at most one loop for the all zero vector, and each parallel class has at most $q-1$ elements. It follows that $\pi_{\mathcal{F}}(m) \leq (q-1)(\ell\cdot m)+1 \leq \ell q \cdot m$, as desired.
\end{proof}

Next we apply Haussler's Shallow Packing Lemma (\cref{lem:packing}). We write $c = c(\ell q)$ for the function from \cref{lem:packing}, and we set $\delta = \ell qc+1$. Notice that $\delta$ is just a function of $t$ and $q$. By \cref{clm:shatterFunc} and Haussler's Lemma, either $\mathcal{F}$ is not $\delta$-separated, or $|\mathcal{F}|\leq c|V|/\delta$.

First suppose that $|\mathcal{F}|\leq c|V|/\delta$.  
Recall that 
all distinct elements in $E(M)\setminus B$ correspond to distinct sets in $\mathcal{F}$. So $|E(M)\setminus B|=|\mathcal{F}|$. Since the dual of $M$ is a simple $\GF(q)$-representable matroid with no 
$M(K_t)$ minor, the Growth Rate Theorem (\cref{thm:GFgrowthRate}) yields $|E(M^*)|\leq \ell |E(M)\setminus B|=\ell |\mathcal{F}|$. Thus\begin{align*}
|V|= (q-1)|B|\leq q|E(M^*)| \leq \ell q|\mathcal{F}|\leq \ell qc|V|/\delta.
\end{align*}So $\delta \leq \ell qc$, however we chose $\delta = \ell qc+1$, a contradiction. 

Thus $\mathcal{F}$ is not $\delta$-separated. So there exist distinct elements $e,e' \in E(M)\setminus B$ so that there are fewer than $\delta$ elements in the symmetric difference of $F_e$ and $F_{e'}$. Thus there are fewer than $\delta$ rows of $\big[ I\text{ | }A \big]$ where the columns of $e$ and $e'$ differ. Let $B' \subseteq B$ be the basis elements corresponding to those rows. 
Given an element $e  \in E(M)$, we write $\vec{e}$ for the corresponding column vector of $\big[ I\text{ | }A \big]$.
So $\vec{e}-\vec{e'}$ is in the span of $\{\vec{b}:b \in B'\}$. Thus $B' \cup \{e,e'\}$ contains a circuit of $M$. Since $|B'|\leq \delta-1$, the theorem holds with $k = \delta+1$.
\end{proof}

\section{Conclusion}
\label{sec:conclusion}

In this section, we discuss possible extensions of \cref{thm:main} that relax the condition of being $\GF(q)$-representable.

We write $U_{2,q}$ for the $q$-element line, and, more generally, $U_{t,q}$ for the uniform matroid with $q$ elements and rank $t$. That is, $U_{t,q}$ is the $q$-element matroid where the circuits are the sets of size $t+1$.
The Growth Rate Theorem of Geelen and Whittle~\cite{geelen2003cliques} also applies to matroids that forbid $U_{2,q+2}$ as a minor, rather than just to $\GF(q)$-representable matroids. (Recall that $\GF(q)$-representable matroids do not have $U_{2, q+2}$ minors; see for instance~\cite[Corollary 6.5.3]{Oxl11}.)

\begin{theorem}[\cite{geelen2003cliques}]
\label{thm:growthRate}
    For any integers $t$ and $q$, there exists an integer $\ell(t,q)$ such that any simple rank-$n$ matroid with no $U_{2,q+2}$ or $M(K_t)$ minor has at most $\ell(t,q)\cdot n$ elements.
\end{theorem}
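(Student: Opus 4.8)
This is the Growth Rate Theorem of Geelen and Whittle in the form that forbids a long line rather than fixing a field; it is quoted here rather than reproved, but let me sketch how I would establish it. The plan is to deduce it from the density trichotomy for minor-closed classes of matroids: if $\mathcal{M}$ is minor-closed and $U_{2,\ell}\notin\mathcal{M}$ for some $\ell$, then the extremal function $h_\mathcal{M}(n)=\max\{|N|: N\in\mathcal{M}\text{ simple},\ r(N)\le n\}$ is finite and has one of exactly three growth types: $O(n)$; $\Theta(n^2)$, in which case $\mathcal{M}$ contains every graphic matroid; or $\Theta((q')^{n})$ for a prime power $q'$, in which case $\mathcal{M}$ contains every $\GF(q')$-representable matroid. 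I would apply this to the class $\mathcal{M}_{t,q}$ of matroids with no $U_{2,q+2}$-minor and no $M(K_t)$-minor; the hypothesis of the trichotomy holds since $U_{2,q+2}\notin\mathcal{M}_{t,q}$. Now $M(K_t)$ is graphic, hence regular, hence has no $U_{2,q+2}$-minor for any $q\ge 2$; so the only reason it fails to lie in $\mathcal{M}_{t,q}$ is that it is explicitly excluded, and therefore $\mathcal{M}_{t,q}$ does not contain every graphic matroid. Consequently neither the quadratic nor the exponential type can occur (each would force all graphic matroids, and in particular $M(K_t)$, into the class), so $h_{\mathcal{M}_{t,q}}(n)=O(n)$. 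Taking $\ell(t,q)$ to be any constant witnessing this linear bound — it depends only on the class $\mathcal{M}_{t,q}$, hence only on $t$ and $q$ — yields the theorem.

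For a self-contained argument (the general trichotomy postdates \cite{geelen2003cliques}), I would follow the original route. Begin with Kung's theorem that a simple rank-$n$ matroid with no $U_{2,q+2}$-minor has at most $(q^n-1)/(q-1)$ elements; this confines the class to the exponential regime and, more importantly, forces every rank-$2$ flat to contain at most $q+1$ points. Take a minimal counterexample $M$ to the desired linear bound, use matroid connectivity reductions to pass to a highly connected, still-dense minor, and exploit the local structure forced by the short-line condition: near any bounded-rank flat, $M$ resembles a projective or affine geometry over a field of order at most $q$ closely enough that a density-increment or Ramsey-type argument produces a projective-geometry-like minor of unbounded rank over some $\GF(q')$ with $q'\le q$. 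From such a dense, highly structured minor one then extracts an $M(K_t)$-minor, contradicting the hypothesis.

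The crux, and the step I expect to be the main obstacle, is precisely this last one: converting the failure of the linear bound — i.e.\ that $M$ is very dense while all its lines stay short — into an $M(K_t)$-minor. This is a matroid analogue of Mader's theorem, but one cannot split vertices as in the graphic proof; the clique minor has to be assembled intrinsically from the geometry of $M$, and this is the genuinely novel content of Geelen and Whittle's work, with the connectivity bookkeeping and the appeal to Kung's bound being comparatively routine. The reduction also cannot be shortcut through the $\GF(q)$-representable case of \cref{thm:GFgrowthRate}, since a matroid with no $U_{2,q+2}$-minor need not be $\GF(q)$-representable; the passage from fixed fields to the long-line condition is substantive and is handled by the same geometric machinery.
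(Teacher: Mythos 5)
The paper does not prove this statement at all: it is imported verbatim from Geelen and Whittle \cite{geelen2003cliques} as a black box, exactly as you treat it, so there is no in-paper argument to compare against. Your deduction from the Geelen--Kung--Whittle density trichotomy is logically sound: excluding $U_{2,q+2}$ rules out the infinite case, and since $M(K_t)$ is regular it lies in every class containing all graphic (a fortiori all $\GF(q')$-representable) matroids, so the quadratic and exponential regimes are impossible and the extremal function must be linear. The one caveat you already half-acknowledge is worth stating plainly: the trichotomy is not an independent source for this theorem, since its proof builds on the quadratic and linear growth-rate results of \cite{geelen2003cliques} itself, so this route is a reduction to a logically stronger later theorem rather than a proof from scratch. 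Your second, ``self-contained'' paragraph is an accurate description of the shape of the original argument (Kung's bound, connectivity reductions, extracting an $M(K_t)$-minor from density) but is a roadmap rather than a proof; since the paper itself only cites the result, that is an appropriate level of detail here.
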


In light of \cref{thm:main} and the Growth Rate Theorem (\cref{thm:growthRate}), it is natural to conjecture the following.


\begin{conjecture}
For any positive integer $t$, there exists an integer $p(t)$ such that every cosimple matroid with girth at least $p(t)$ contains either $U_{2,t+2}$, $U_{t,t+2}$, $M(K_t)$, or $M(K_t)^*$ as a minor. 
\end{conjecture}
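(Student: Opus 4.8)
The plan is to run the proof of \cref{thm:GFqCase} with \cref{thm:growthRate} in place of \cref{thm:GFgrowthRate}, so that the forbidden minors $U_{2,t+2}$ and $U_{t,t+2}$ take over the role played by the field: a matroid with no $U_{2,t+2}$ minor has every line of size at most $t+1$, so $t$ itself plays the part of $q$. As before it suffices to prove the local statement that there is $k=k(t)$ such that every cosimple matroid $M$ with none of $U_{2,t+2}$, $U_{t,t+2}$, $M(K_t)$, $M(K_t)^*$ as a minor has, for every basis $B$, a circuit $C$ with $|C|\le k$ and $|C\setminus B|\le 2$; evaluating this at any basis bounds the girth, and contraposition gives the conjecture with $p(t)=k(t)+1$. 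Since the forbidden family is closed under duality, cosimplicity makes $M^*$ simple with the same excluded minors, which is exactly what lets one apply \cref{thm:growthRate} to $M^*$ in the counting step as in \cref{thm:GFqCase}.

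The shatter-function side of the argument survives without a representation. Take the set system $\mathcal F=\{C_e\setminus\{e\}: e\in E(M)\setminus B\}$ on ground set $B$, where $C_e$ is the fundamental circuit of $e$ with respect to $B$. To bound $\pi_{\mathcal F}(m)$, restrict to an $m$-element set $W\subseteq B$, contract $B\setminus W$, and observe that $F_e\cap W$ records the support of the fundamental circuit of $e$ with respect to $W$ in the minor $M/(B\setminus W)$; picking one non-basis element per distinct such circuit yields a set of pairwise non-parallel (non-loop) elements of this minor, so by \cref{thm:growthRate} the number of distinct restrictions is linear in $m$ with a constant depending only on $t$. Hence Haussler's Lemma (\cref{lem:packing}) applies to $\mathcal F$, and the branch in which $|\mathcal F|$ is small yields the same numerical contradiction as in \cref{thm:GFqCase}, using the growth-rate bound on $|E(M^*)|$.

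The obstacle is the other branch: deducing a short circuit from failure of $\delta$-separation. If $C_e$ and $C_{e'}$ agree off a small set $B'\subseteq B$, this no longer forces $\{e,e'\}\cup B'$ to be dependent, because matroids are not determined by their fundamental graphs — the step in \cref{thm:GFqCase} that extracts the circuit used the identity submatrix, i.e. the representation. So one must instead work with a richer set system that stores enough \emph{local} matroid data (around each non-basis element $e$, relative to each basis element) that both a coincidence of two of its sets and a failure of $\delta$-separation force a short circuit meeting $E(M)\setminus B$ in at most two elements, all while keeping the shatter function linear in the sense required by \cref{lem:packing}. Designing such a set system is the heart of the problem, and I expect that no bounded amount of local data suffices in full generality.

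The most plausible route around this is then to first use the structure theory of matroids with no $U_{2,t+2}$ minor to reduce to the case that $M$ is a bounded-rank perturbation of a $\GF(q')$-representable matroid for some prime power $q'$ bounded in terms of $t$ — with the exceptional bounded-rank, frame, and coframe ingredients of the structure theorem supplying the $U_{2,t+2}$, $U_{t,t+2}$, $M(K_t)$, and $M(K_t)^*$ outcomes — and only then to run the set-system argument essentially as in \cref{thm:GFqCase}, absorbing the perturbation into the parameter $k$. The Growth Rate Theorem (\cref{thm:growthRate}) and Haussler's Shallow Packing Lemma (\cref{lem:packing}) remain the two driving tools; whether, as in the representable case, structure theory can ultimately be circumvented is the real question.
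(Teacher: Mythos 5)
The statement you are addressing is a conjecture; the paper offers no proof of it, so there is nothing to compare your argument against except the representable case (\cref{thm:GFqCase}). Your proposal is not a proof, and to your credit you say so: the third paragraph correctly isolates the step that breaks. In \cref{thm:GFqCase}, the deduction ``$F_e$ and $F_{e'}$ differ in fewer than $\delta$ coordinates $\Rightarrow$ $B'\cup\{e,e'\}$ is dependent'' is a statement about column vectors of $\big[ I\text{ | }A \big]$; without a representation, two fundamental circuits $C_e$ and $C_{e'}$ agreeing outside a small set $B'\subseteq B$ gives no dependency among $B'\cup\{e,e'\}$, precisely because a matroid is not determined by its fundamental graph. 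Everything else in your sketch does survive: the shatter-function bound via \cref{thm:growthRate} (contracting $B\setminus W$ and noting that distinct restrictions give non-parallel elements of the minor), the application of \cref{lem:packing}, and the counting contradiction in the non-separated-free branch using simplicity of $M^*$. But the conclusion you need from the $\delta$-separation failure is exactly the one step that used $\GF(q)$-representability, so the argument as written proves nothing new.

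Your proposed repair --- invoke the structure theory of matroids with no $U_{2,t+2}$ minor to reduce to a bounded perturbation of a representable matroid --- is a reasonable heuristic but is not carried out, and it runs against the grain of the paper, whose point is that even the representable case can be done \emph{without} structure theory. Moreover the perturbation step is not routine: a bounded-rank perturbation can destroy or create short circuits, so ``absorbing the perturbation into $k$'' needs an actual argument, and the frame/coframe parts of the structure theorem produce outcomes ($B(K_t)$-like minors) that the conjecture as stated does not list --- compare \cref{con:general}, where the authors explicitly add $B(K_t)$ for this reason. So the honest summary is: you have correctly located the obstruction and reproduced the parts of the machinery that generalize, but the statement remains open and your proposal does not close it.
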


For large $t$, both $U_{t,t+2}$ and $M(K_t)^*$ have large girth, while $M(K_t)$ has a cosimple minor of large girth. However, it is less satisfying to forbid the line $U_{2, t+2}$. So it is natural to ask the more general question: What are the unavoidable cosimple matroids of large girth? To frame this problem precisely, let us consider a property $\mathcal{P}$ of classes of matroids. For example, we write $\mathcal{P}_{\mathrm{girth}}$ for the property ``the class contains cosimple matroids of arbitrarily large girth''. A class of matroids $\mathcal{M}$ is \emph{minor-minimal} with respect to $\mathcal{P}$ if $\mathcal{M}$ is minor-closed, $\mathcal{M}$ has property $\mathcal{P}$, and no proper minor-closed subclass of $\mathcal{M}$ has property~$\mathcal{P}$.

As an example, the class of all graphic matroids is minor-minimal with respect to $\mathcal{P}_{\mathrm{girth}}$ due to Thomassen's Theorem~\cite{ThomassenGirthGraphs}. Likewise, the class of all cographic matroids is minor-minimal with respect to $\mathcal{P}_{\mathrm{girth}}$ due to Mader's Theorem~\cite{Mader98}. 
It is straightforward to see that the closure of all colines $U_{t,t+2}$ under minors is also minor-minimal with respect to $\mathcal{P}_{\mathrm{girth}}$.
We conjecture that classes with property $\mathcal{P}_{\mathrm{girth}}$ have a finite characterization. 

\begin{conjecture}
\label{conj:wild}
There exist a finite number of classes $\mathcal{M}_1, \mathcal{M}_2, \ldots, \mathcal{M}_k$ which are minor-minimal with respect to property $\mathcal{P}_{\mathrm{girth}}$ such that a class of matroids $\mathcal{M}$ has property $\mathcal{P}_{\mathrm{girth}}$ if and only if it contains at least one of $\mathcal{M}_1, \mathcal{M}_2, \ldots, \mathcal{M}_k$.
\end{conjecture}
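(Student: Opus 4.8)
The ``if'' direction is immediate: the property $\mathcal{P}_{\mathrm{girth}}$ is preserved under passing to a minor-closed superclass, so any minor-closed $\mathcal{M}$ containing some $\mathcal{M}_i$ with $\mathcal{P}_{\mathrm{girth}}$ also has $\mathcal{P}_{\mathrm{girth}}$. Hence it suffices to produce a finite list of minor-closed classes $\mathcal{M}_1,\dots,\mathcal{M}_k$, each with $\mathcal{P}_{\mathrm{girth}}$ and each \emph{minor-generated by its cosimple members of large girth} (meaning: for every $g$, the minor-closure of $\{N\in\mathcal{M}_i : N \text{ cosimple},\ \mathrm{girth}(N)\ge g\}$ equals $\mathcal{M}_i$), such that every minor-closed class with $\mathcal{P}_{\mathrm{girth}}$ contains one of them. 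Minor-minimality of each $\mathcal{M}_i$ then follows from the generation property: a proper minor-closed subclass misses some generating witness $N$ of girth $g$, hence --- since inside these template families a witness of girth $g'>g$ has a cosimple witness of girth $g$ as a minor --- misses all witnesses of girth at least $g$, and so fails $\mathcal{P}_{\mathrm{girth}}$. The ``only if'' direction is exactly the containment statement.

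I would first check the three families the paper already singles out. Write $\mathcal{G}$, $\mathcal{G}^*$, and $\mathcal{U}$ for, respectively, the graphic matroids, the cographic matroids, and the minor-closure of the colines $\{U_{r,r+2} : r\ge 1\}$. Each has $\mathcal{P}_{\mathrm{girth}}$ (via $3$-edge-connected graphs of large girth, via duals of simple graphs of large edge-connectivity, and via the $U_{r,r+2}$ themselves), and each is minor-generated by its cosimple large-girth members: a $3$-edge-connected graph of large girth has a $K_t$-minor for every $t$ by Thomassen's theorem~\cite{ThomassenGirthGraphs} (or~\cite{KuhnOsthus2003}), and every graphic matroid is a minor of some $M(K_t)$; dually, using Mader's theorem~\cite{Mader98}, for $\mathcal{G}^*$; and $U_{R,R+2}$ contracts and deletes down to every $U_{r,n}$ with $n-r\le 2$. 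Next, I would dispose of the $\GF(q)$-representable case directly from \cref{thm:main}. If $\mathcal{M}$ is a minor-closed class of $\GF(q)$-representable matroids with $\mathcal{P}_{\mathrm{girth}}$, then for each $t$ some cosimple member of $\mathcal{M}$ has girth at least $f(t,q)$ and hence, by \cref{thm:main}, an $M(K_t)$- or $M(K_t)^*$-minor; so $M(K_t)\in\mathcal{M}$ or $M(K_t)^*\in\mathcal{M}$ for every $t$. If $\mathcal{M}$ does not contain all of $\mathcal{G}$, it excludes some $M(H)$ and hence (as $M(H)$ is a minor of $M(K_{|V(H)|})$) excludes $M(K_t)$ for all large $t$, forcing $M(K_t)^*\in\mathcal{M}$ for all large $t$, and thus $\mathcal{M}\supseteq\mathcal{G}^*$. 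So within the $\GF(q)$-representable universe $\mathcal{G}$ and $\mathcal{G}^*$ are the only minor-minimal classes for $\mathcal{P}_{\mathrm{girth}}$, and \cref{conj:wild} holds there with $k=2$.

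The general case needs a structural input that plays, for cosimple matroids of large girth, the role that \cref{thm:main} plays in the representable case: a theorem asserting that, after passing to a suitable minor, such a matroid belongs to one of boundedly many ``templates''. Concretely, the plan is to prove that there is a finite list $\mathcal{M}_1,\dots,\mathcal{M}_k$ of minor-closed classes, each minor-generated by its cosimple large-girth members, such that for every $g$ there is $h(g)$ with the property that every cosimple matroid of girth at least $h(g)$ has a cosimple minor of girth at least $g$ lying in some $\mathcal{M}_i$. Granting this, \cref{conj:wild} follows by pigeonhole: if $\mathcal{M}$ has $\mathcal{P}_{\mathrm{girth}}$, choose for each $g$ a cosimple witness in $\mathcal{M}$ of girth at least $h(g)$ and replace it by the supplied minor; infinitely many of the resulting witnesses lie in a single $\mathcal{M}_i$, so $\mathcal{M}$ contains cosimple members of $\mathcal{M}_i$ of unbounded girth, whence the generation property of $\mathcal{M}_i$ gives $\mathcal{M}\supseteq\mathcal{M}_i$. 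A natural first milestone is the conjecture stated just before \cref{conj:wild} (unavoidable \emph{minors} $U_{2,t+2}$, $U_{t,t+2}$, $M(K_t)$, $M(K_t)^*$); note, however, that this is strictly weaker, since the long lines $U_{2,t+2}$ have girth $3$ and so are not themselves witnesses of $\mathcal{P}_{\mathrm{girth}}$ --- the template theorem must additionally pin down what the high-girth \emph{host} of such a line looks like, and I would expect the answer there to again be ``$\mathcal{G}$, $\mathcal{G}^*$, or $\mathcal{U}$'', i.e.\ I conjecture $k=3$ suffices.

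The main obstacle is proving the template theorem outside the $\GF(q)$-representable case. No Matroid Minors Structure Theorem is available in that generality, and the usual substitutes fail here: cosimple matroids of large girth need not have large branch-width (the colines $U_{r,r+2}$ are a counterexample), and neither all matroids nor the bounded-branch-width matroids are well-quasi-ordered under the minor relation, so one cannot extract a minor-minimal subclass by invoking well-quasi-ordering. I would attack this through a dichotomy driven by the Growth Rate Theorem (\cref{thm:growthRate}): either a cosimple witness of large girth has a $U_{2,q}$-minor for arbitrarily large $q$, in which case its cocircuit space is extremely constrained and I would try to show that the cosimple and large-girth hypotheses force it to be close to a coline and hence to have a cosimple $U_{r,r+2}$-minor of large girth; or the witness lies in some fixed $U_{2,q}$-minor-free universe, where \cref{thm:growthRate} and the techniques behind \cref{thm:GFqCase} should be adapted to reach the graphic and cographic templates even without an explicit matrix representation. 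Making the $U_{2,q}$-minor (equivalently, bounded-branch-width) regime precise, and in particular ruling out exotic minor-minimal families there, is where I expect essentially all of the difficulty to lie; it is quite possible that a full proof of \cref{conj:wild} must await new structural tools for non-representable matroids.
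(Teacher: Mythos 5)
There is a fundamental problem: \cref{conj:wild} is an open conjecture in the paper --- the authors give no proof of it, only remark that it would follow from \cref{con:general} --- and your proposal does not prove it either. What you actually establish is a reduction framework: the trivial ``if'' direction, the generation properties of the concrete families $\mathcal{G}$, $\mathcal{G}^*$, $\mathcal{U}$, and the special case of classes lying inside the $\GF(q)$-representable matroids for a fixed $q$ (which does follow correctly from \cref{thm:main} by the argument you give). But the entire content of the conjecture is the general case, and there your argument rests on an unproven ``template theorem'' --- that every cosimple matroid of huge girth has a cosimple high-girth minor in one of boundedly many template classes --- which is essentially a restatement of the paper's \cref{con:general} and which you yourself acknowledge may require structural tools that do not exist for non-representable matroids. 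So the proposal is a research programme mirroring the paper's own suggested route, not a proof; the genuine gap is exactly the step the paper leaves as a conjecture.

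Two further points. First, your abstract framework is logically too weak even granting the template theorem: the property you call ``minor-generated by its cosimple members of large girth'' (the minor-closure of the girth-$\geq g$ cosimple members equals $\mathcal{M}_i$ for every $g$) does not by itself yield either your minor-minimality deduction or the final pigeonhole step. Both arguments need the stronger, universality-type statement that \emph{every} cosimple member of $\mathcal{M}_i$ of sufficiently large girth contains any prescribed member of $\mathcal{M}_i$ (e.g.\ $M(K_t)$, $M(K_t)^*$, $U_{t,t+2}$) as a minor --- true for the concrete families via Thomassen and Mader, but not a consequence of the generation property as you state it, so the framework should be formulated with that stronger hypothesis. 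Second, your closing expectation that $k=3$ suffices (with $\mathcal{G}$, $\mathcal{G}^*$, $\mathcal{U}$) conflicts with the paper's own discussion: the (minor-closure of the) bicircular matroids is another minor-minimal class with $\mathcal{P}_{\mathrm{girth}}$, which forces it onto any list satisfying \cref{conj:wild} (apply the ``only if'' direction to that class itself) and is precisely why \cref{con:general} includes $B(K_t)$ as a fourth unavoidable minor. Your dichotomy sketch in the last paragraph would therefore have to account for bicircular-type hosts as well; in particular, the hope that the long-line regime collapses to ``close to a coline'' is too optimistic, since $B(K_t)$ also has large lines as minors while having cosimple large-girth structure of a different kind.
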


Matroids are not well-quasi-ordered under minors, even though graphs famously are, as proven by Robertson and Seymour~\cite{RobertsonSeymour}. Moreover, \cref{conj:wild} is an example of ``second-level better-quasi-ordering'', and it is wide open whether graphs are second-level better-quasi-ordered under minors. See~\cite{secondLevelEP} for recent progress on this question and~\cite{infiniteCliqueMinors} for a brief discussion by Robertson and Seymour. Still, we are optimistic about \cref{conj:wild} since there are some natural matroids to forbid.

In particular, let us write $B(G)$ for the \emph{bicircular} matroid of a graph $G$. Bicircular matroids were introduced by Sim\~{o}es-Pereira~\cite{bicircular}, and we refer the reader there for definitions.
It can be proven using Thomassen's Theorem~\cite{ThomassenGirthGraphs} that the class of bicircular matroids is minor-minimal with respect to $\mathcal{P}_{\mathrm{girth}}$.
We conjecture the following, which would imply \cref{conj:wild}.

\begin{conjecture}
\label{con:general}
There exists a function $g$ such that for every integer $t$, every cosimple matroid with girth at least $g(t)$ contains either $U_{t,t+2}$, $M(K_t)$, $M(K_t)^*$, or $B(K_t)$ as a minor.
\end{conjecture}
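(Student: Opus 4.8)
The plan is to follow the proof of \cref{thm:GFqCase} as closely as possible, with the new hypotheses about $U_{t,t+2}$ and $B(K_t)$ playing the role that $\GF(q)$-representability played there. Let $M$ be a cosimple matroid with no $U_{t,t+2}$, $M(K_t)$, $M(K_t)^*$, or $B(K_t)$ minor and let $B$ be a basis; as in \cref{thm:GFqCase} the goal is to find a circuit of size bounded in terms of $t$, contradicting the assumption of large girth. The first observation is that the hypothesis on $U_{t,t+2}$ is really a statement about the dual: since $M$ has no $U_{t,t+2}$ minor and no $M(K_t)^*$ minor, the dual $N := M^*$ is a \emph{simple} matroid with no $U_{2,t+2}$ minor (that is, no long line) and no $M(K_t)$ minor, so the general Growth Rate Theorem (\cref{thm:growthRate}) gives $|E(M)| = |E(N)| \le \ell(t,t)\cdot r(N)$. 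This is exactly the analogue of the application of the Growth Rate Theorem to $M^*$ in the proof of \cref{thm:GFqCase}, and it goes through for arbitrary matroids precisely because forbidding the coline $U_{t,t+2}$ in $M$ is the same as forbidding the long line in $N$.

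The hard part is replacing the set system $\mathcal{F}$ and the use of Haussler's Lemma (\cref{lem:packing}). In the $\GF(q)$-representable case $\mathcal{F}$ lives on $B\times(\GF(q)\setminus\{0\})$, a ground set of size linear in $|B|$, because $\GF(q)$ supplies a bounded alphabet of ``local data'' at each entry $A_{b,e}$; the shatter-function bound comes from applying the Growth Rate Theorem to the contraction minors $M/X$ with $X\subseteq B$ (using that these are $\GF(q)$-representable and hence have no long line), and a failure of $\delta$-separation is converted, via the coordinates, into two non-basis elements whose fundamental circuits differ in only a few basis elements, hence into a short circuit. For a general matroid none of this is available: there is no bounded alphabet of local data, the family $\{C(e,B)\cap B : e\notin B\}$ of fundamental-circuit supports does not determine $M$, and two elements with almost equal fundamental circuits need not lie on a common short circuit (the behaviour of long lines already shows this). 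So one would need (i) a finite-alphabet combinatorial substitute for the $\GF(q)$-coordinates, so that the associated set system still has ground set of size $O(|B|)$, and (ii) a replacement for the implication ``small symmetric difference in $\mathcal{F}$ $\Rightarrow$ short circuit''. I expect (i)--(ii) to be the main obstacle. It is tempting to hope that the forbidden minors $M(K_t)$, $M(K_t)^*$, and especially $B(K_t)$ are exactly what is needed to bound the ``local complexity'' that $\GF(q)$ controls in the representable case, but turning this hope into a bounded alphabet seems to require genuine structural input --- for instance, the description of linear-growth, no-long-line classes in terms of graphic, cographic, and bounded-group frame and lifted-graphic matroids that underlies the proof of the Growth Rate Theorem --- applied to $N=M^*$.

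Carrying out such a structural argument in the dual seems the most promising route. Informally, $N$ is a simple matroid with no long line, linear growth, no $M(K_t)$ minor, no $M(K_t)^*$ minor, no $B(K_t)^*$ minor, and large \emph{cogirth} (because $M$ has large girth). A linear-growth class with no long line should be built from graphic, cographic, and bounded-group frame/lift matroids; imposing large cogirth on $N$ ought to force the graphic or frame pieces to be highly connected, whereupon a Mader/Thomassen-type argument should produce an $M(K_t)$, $M(K_t)^*$, or $B(K_t)^*$ minor, a contradiction. One point must be dealt with before invoking any structure theory: $M$ itself may have a long-line minor $U_{2,s}$ --- this is \emph{not} excluded by forbidding the coline $U_{t,t+2}$ --- so one should first handle the case that $M$ has a $U_{2,s}$ minor for a suitable $s=s(t)$, presumably by showing directly that a long line spread across contractions of a cosimple matroid of large girth already forces a $B(K_t)$ minor (this is plausibly why the bicircular outcome is unavoidable), and otherwise proceed as above. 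Given all of this, I would first try to prove \cref{con:general} for frame matroids, where the extremal examples and the relevant structure theory are explicit; a positive resolution there would already settle the frame-class instances of \cref{conj:wild} and should indicate whether the general statement is within reach.
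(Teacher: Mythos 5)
There is a fundamental point to flag first: the statement you are addressing is \cref{con:general}, which the paper poses as an \emph{open conjecture} in its concluding section (it is stated as a strengthening that would imply \cref{conj:wild}); the paper proves only the $\GF(q)$-representable case, namely \cref{thm:main} via \cref{thm:GFqCase}. So there is no proof in the paper to compare against, and your text is, by its own account, a research programme rather than a proof. The only step you actually carry out is the sound but easy observation that forbidding the coline $U_{t,t+2}$ in $M$ and $M(K_t)^*$ in $M$ lets one apply \cref{thm:growthRate} to the simple dual $M^*$, giving $|E(M)|=O_t(r(M^*))$; this mirrors the one place in the proof of \cref{thm:GFqCase} where the dual is used, and it is fine.

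Everything that makes the argument of \cref{thm:GFqCase} work is exactly what you leave open. The set system $\mathcal{F}$ on $B\times(\GF(q)\setminus\{0\})$, the shatter-function bound of \cref{clm:shatterFunc} (which applies the Growth Rate Theorem to the contractions $M/(B\setminus B_W)$, using that these remain $\GF(q)$-representable and hence line-free), the application of Haussler's Lemma (\cref{lem:packing}), and the final linear-algebra step converting a failure of $\delta$-separation into a circuit inside $B'\cup\{e,e'\}$ all rely on having coordinates over a fixed finite field. You correctly identify that none of these transfers to general matroids --- there is no bounded alphabet of local data, fundamental circuits do not determine the matroid, contractions of $M$ need not avoid long lines even when $M$ has no $U_{t,t+2}$ minor, and the ``small symmetric difference $\Rightarrow$ short circuit'' implication has no matroid-axiomatic substitute --- but you then only gesture at how these gaps might be filled (``should'', ``ought to'', ``plausibly''), invoking unspecified structure theory for linear-growth classes applied to $M^*$ and an unproved claim that long-line minors in a cosimple matroid of large girth force a $B(K_t)$ minor. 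These are precisely the missing ideas that make \cref{con:general} a conjecture rather than a theorem, so the proposal contains a genuine (indeed total) gap: no part of the core argument is established beyond the dual growth-rate bound. The suggestion to first treat frame matroids is a reasonable research direction, but it is not a proof of any case of the statement as written.
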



\section*{Acknowledgements}
This work was completed at the 12th Annual Workshop on Geometry and Graphs held at the Bellairs Research Institute in February 2025. We are grateful to the organizers and participants for providing an excellent research environment. We would also like to thank Jim Geelen and Szymon Toru\'{n}czyk for helpful comments.


\bibliographystyle{abbrv}
\bibliography{matroids}

\begin{thebibliography}{10}

\bibitem{complexityTWW}
{\'{E}}.~Bonnet, F.~Foucaud, T.~Lehtilä, and A.~Parreau.
\newblock Neighbourhood complexity of graphs of bounded twin-width.
\newblock {\em European Journal of Combinatorics}, 115:103772, 2024.

\bibitem{graphicIsoSystems}
A.~Bouchet.
\newblock Graphic presentations of isotropic systems.
\newblock {\em J. Combin. Theory Ser. B}, 45(1):58--76, 1988.

\bibitem{FPS19}
J.~Fox, J.~Pach, and A.~Suk.
\newblock Erd{\H{o}}s-{H}ajnal conjecture for graphs with bounded
  {VC}-dimension.
\newblock {\em Discrete Comput. Geom.}, 61(4):809--829, 2019.

\bibitem{flipperGamesMonStable}
J.~Gajarsk\'{y}, N.~M\"{a}hlmann, R.~McCarty, P.~Ohlmann, M.~Pilipczuk,
  W.~Przybyszewski, S.~Siebertz, M.~Soko\l{}owski, and S.~Toru\'{n}czyk.
\newblock {Flipper Games for Monadically Stable Graph Classes}.
\newblock In K.~Etessami, U.~Feige, and G.~Puppis, editors, {\em 50th
  International Colloquium on Automata, Languages, and Programming (ICALP)},
  volume 261 of {\em Leibniz International Proceedings in Informatics
  (LIPIcs)}, pages 128:1--128:16, Dagstuhl, Germany, 2023. Schloss Dagstuhl --
  Leibniz-Zentrum f{\"u}r Informatik.

\bibitem{highlyConnMatroids}
J.~Geelen, B.~Gerards, and G.~Whittle.
\newblock The highly connected matroids in minor-closed classes.
\newblock {\em Ann. Comb.}, 19(1):107--123, 2015.

\bibitem{GrowthRateTheorem}
J.~Geelen, J.~P.~S. Kung, and G.~Whittle.
\newblock Growth rates of minor-closed classes of matroids.
\newblock {\em J. Combin. Theory Ser. B}, 99(2):420--427, 2009.

\bibitem{geelen2003cliques}
J.~Geelen and G.~Whittle.
\newblock Cliques in dense {G}{F}(q)-representable matroids.
\newblock {\em J. Combin. Theory Ser. B}, 87(2):264--269, 2003.

\bibitem{GGKexcluded}
J.~F. Geelen, A.~M.~H. Gerards, and A.~Kapoor.
\newblock The excluded minors for {${\rm GF}(4)$}-representable matroids.
\newblock {\em J. Combin. Theory Ser. B}, 79(2):247--299, 2000.

\bibitem{GHMO}
J.~P. Gollin, K.~Hendrey, D.~Mayhew, and S.-i. Oum.
\newblock Obstructions for bounded branch-depth in matroids.
\newblock {\em Adv. Comb.}, pages Paper No. 4, 25, 2021.

\bibitem{Haussler95}
D.~Haussler.
\newblock Sphere packing numbers for subsets of the {B}oolean {$n$}-cube with
  bounded {V}apnik-{C}hervonenkis dimension.
\newblock {\em J. Combin. Theory Ser. A}, 69(2):217--232, 1995.

\bibitem{Kostochka82}
A.~V. Kostochka.
\newblock The minimum {H}adwiger number for graphs with a given mean degree of
  vertices.
\newblock {\em Metody Diskret. Analiz.}, (38):37--58, 1982.

\bibitem{Kostochka84}
A.~V. Kostochka.
\newblock Lower bound of the {H}adwiger number of graphs by their average
  degree.
\newblock {\em Combinatorica}, 4(4):307--316, 1984.

\bibitem{KuhnOsthus2003}
D.~K\"uhn and D.~Osthus.
\newblock Minors in graphs of large girth.
\newblock {\em Random Structures Algorithms}, 22(2):213--225, 2003.

\bibitem{MaderAverageDegree}
W.~Mader.
\newblock Homomorphieeigenschaften und mittlere {K}antendichte von {G}raphen.
\newblock {\em Math. Ann.}, 174:265--268, 1967.

\bibitem{Mader98}
W.~Mader.
\newblock Topological subgraphs in graphs of large girth.
\newblock {\em Combinatorica}, 18(3):405--412, 1998.

\bibitem{MatousekBook}
J.~Matou\v{s}ek.
\newblock {\em Lectures on discrete geometry}, volume 212 of {\em Graduate
  Texts in Mathematics}.
\newblock Springer-Verlag, New York, 2002.

\bibitem{McCartyThesis}
R.~McCarty.
\newblock Local structure for vertex-minors.
\newblock {\em {P}h{D} thesis, University of Waterloo}, Oct 2021.

\bibitem{nelson2023density}
P.~Nelson, S.~Norin, and F.~R. Omana.
\newblock On the density of matroids omitting a complete-graphic minor.
\newblock {\em arXiv preprint arXiv:2306.15061}, 2023.

\bibitem{RWAndVM}
S.~Oum.
\newblock Rank-width and vertex-minors.
\newblock {\em J. Combin. Theory Ser. B}, 95(1):79--100, 2005.

\bibitem{Oxl11}
J.~G. Oxley.
\newblock {\em Matroid theory}, volume~21 of {\em Oxf. Grad. Texts Math.}
\newblock Oxford: Oxford University Press, 2nd ed. edition, 2011.

\bibitem{secondLevelEP}
C.~Paul, E.~Protopapas, D.~M. Thilikos, and S.~Wiederrecht.
\newblock Obstructions to {E}rd{\H{o}}s-{P}\'osa dualities for minors.
\newblock In {\em 2024 {IEEE} 65th {A}nnual {S}ymposium on {F}oundations of
  {C}omputer {S}cience---{FOCS} 2024}, pages 31--52. IEEE Computer Soc., Los
  Alamitos, CA, [2024] \copyright 2024.

\bibitem{RVS19}
F.~Reidl, F.~S. Villaamil, and K.~Stavropoulos.
\newblock Characterising bounded expansion by neighbourhood complexity.
\newblock {\em European J. Combin.}, 75:152--168, 2019.

\bibitem{infiniteCliqueMinors}
N.~Robertson, P.~Seymour, and R.~Thomas.
\newblock Excluding infinite clique minors.
\newblock {\em Mem. Amer. Math. Soc.}, 118(566):vi+103, 1995.

\bibitem{RobertsonSeymour}
N.~Robertson and P.~D. Seymour.
\newblock Graph minors. {XX}. {W}agner's conjecture.
\newblock {\em J. Combin. Theory Ser. B}, 92(2):325--357, 2004.

\bibitem{Sauer1972}
N.~Sauer.
\newblock On the density of families of sets.
\newblock {\em J. Combin. Theory Ser. A}, 13:145--147, 1972.

\bibitem{Shelah1972}
S.~Shelah.
\newblock A combinatorial problem; stability and order for models and theories
  in infinitary languages.
\newblock {\em Pacific J. Math.}, 41:247--261, 1972.

\bibitem{bicircular}
J.~M.~S. Sim\~{o}es Pereira.
\newblock On subgraphs as matroid cells.
\newblock {\em Math. Z.}, 127:315--322, 1972.

\bibitem{Suk24}
A.~Suk.
\newblock On short edges in complete topological graphs.
\newblock {\em Discrete and Computational Geometry}, 2024.

\bibitem{Thomason84}
A.~Thomason.
\newblock An extremal function for contractions of graphs.
\newblock {\em Math. Proc. Cambridge Philos. Soc.}, 95(2):261--265, 1984.

\bibitem{ThomassenGirthGraphs}
C.~Thomassen.
\newblock Girth in graphs.
\newblock {\em J. Combin. Theory Ser. B}, 35(2):129--141, 1983.

\bibitem{VC15}
V.~N. Vapnik and A.~Y. Chervonenkis.
\newblock On the uniform convergence of relative frequencies of events to their
  probabilities.
\newblock In {\em Measures of complexity}, pages 11--30. Springer, Cham, 2015.
\newblock Reprint of Theor. Probability Appl. {\bf 16} (1971), 264--280.

\bibitem{Welzl88}
E.~Welzl.
\newblock Partition trees for triangle counting and other range searching
  problems.
\newblock In {\em Proceedings of the {F}ourth {A}nnual {S}ymposium on
  {C}omputational {G}eometry ({U}rbana, {IL}, 1988)}, pages 23--33. ACM, New
  York, 1988.

\end{thebibliography}

\end{document}